\newtheorem{theorem}{Theorem}[section]
\newtheorem{lemma}[theorem]{Lemma}
\newtheorem{remark}{Remark}[section]
\numberwithin{equation}{section}
\begin{document}

\title{Estimates of the Kolmogorov $n$-width for nonlinear transformations with application to distributed-parameter control systems}

\author{Alexander~Zuyev$^{1,2}$, Lihong~Feng$^1$, Peter~Benner}
\affil[1]{Max Planck Institute for Dynamics of Complex Technical Systems, 39106 Magdeburg, Germany}
\affil[2]{Institute of Applied Mathematics and Mechanics, National Academy of Sciences of Ukraine}


\keywords{Distributed-parameter control systems, Kolmogorov $n$-width, nonlinear operators, Euler--Bernoulli beam, Schr{\"o}dinger equation.}

\msc{93C20, 93C25, 41A25, 41A46, 81Q93, 74K10.}

\abstract{
This paper aims at characterizing the approximability of bounded sets in the range of
nonlinear operators in Banach spaces by finite-dimensional linear varieties.
In particular, the class of operators we consider includes the endpoint maps of nonlinear distributed-parameter control systems.
{We describe the relationship between the Kolmogorov} $n$-width of a bounded subset and the width of its image {under an} essentially nonlinear transformation.
We propose explicit estimates of the $n$-width in the space of images in terms of the affine part of the corresponding operator and the width of its nonlinear perturbation.
These $n$-width estimates enable us to describe the reachable sets for infinite-dimensional bilinear control systems, with applications to controlling the Euler--Bernoulli beam using a contraction force and to a single-input Schrödinger equation.
}

 \novelty{The key contributions of this paper are as follows:
\begin{itemize}
\item estimate of the Kolmogorov $n$-width of the image of a bounded set under the action of nonlinear operators in a Banach space;
\item decay rate relation for the $n$-width for nonlinear perturbations of an affine operator;
\item inequality for $n$-width of the reachable set for a class of bilinear distributed-parameter systems with bounded controls;
\item reachable set estimates for a controlled Euler--Bernoulli beam model and a bilinear Schrödinger equation.
\end{itemize}}

\maketitle
\section{Introduction}

The Kolmogorov $n$-width, introduced in~\cite{K1936}, plays a remarkable role in approximation theory, serving as a quantifiable measure of the complexity of sets in infinite-dimensional spaces.
In the context of distributed-parameter dynamical systems, which appear in various scientific and engineering disciplines,
the importance of this concept has been pointed out by several authors (see,~e.g.,~\cite{CD2016},~{\cite{DPW2013}},~\cite{UG2019}).
The Kolmogorov $n$-width provides a rigorous way to {implicitly define} the best possible approximation of such systems in a lower-dimensional subspace, offering insights into the inherent approximability of the system. By understanding the behavior of the Kolmogorov $n$-width for a given system, we can identify the dimensions to which the system can be reliably reduced without significant loss of information, thereby facilitating the development of reduced-order models.

Our paper is structured as follows.
The notion of $n$-width with respect to linear varieties and linear subspaces {in a Banach space} is discussed in Section~\ref{sec_auxiliary}, and some useful lemmas for its  characterization are proposed.
Then, the results of Section~\ref{sec_nonlinear} provide estimates {for the image of bounded sets under nonlinear maps consisting of a dominant affine part and a bounded nonlinear part}.
In Section~\ref{sec_bilinear}, the obtained estimates are applied to the endpoint map of a bilinear control system, i.e. to the nonlinear operator acting from the space of control functions to the endpoint of the corresponding trajectory.
This endpoint map is represented by the Volterra series expansion.
This approach allows characterizing the reachable sets of bilinear systems in a given time interval.
As an example, we analyze the $n$-width of the reachable set for the Euler--Bernoulli beam model controlled by a contraction force in Section~\ref{sec_beam}.
In Section~\ref{sec_schrodinger},
we present another example that addresses estimates for the reachable set of a bilinear control system governed by the Schr\"odinger equation.

\section{Notations and auxiliary results}\label{sec_auxiliary}
Let $X$ be a Banach space equipped with the norm $\|\cdot\|_X$.
We denote the distance from a point $x\in X$ to a nonempty set $W\subset X$ as $\rho_X(x,W)=\inf_{\xi\in W} \|x-\xi\|_X$,
and the asymmetric distance between nonempty sets $K\subset X$ and $W\subset X$ is denoted by
$
\rho_X(K,W) = \sup_{x\in K} \rho_X(x,W)
$.
Then the Hausdorff distance is $d_H(K,W)=\max \{\rho_X(K,W),\rho_X(W,K)\}$.
For an $x\in X$ and $\varepsilon>0$, the $\varepsilon$-neighborhood of $x$ is $B_\varepsilon(x)=\{\xi\in X\,\vert\,\| x-\xi \|_X<\varepsilon\}$, and the $\varepsilon$-neighborhood of a set $W\subset X$ is defined as
$
B_\varepsilon(W) = \cup_{x\in W} B_\varepsilon(x)
$.
The set of all linear subspaces of $X$ of dimension at most $n$ is denoted by ${\cal L}_n(X)$,
and the set of all affine subspaces (linear varieties~\cite[Chap.~II, \S 9]{B1989}) of $X$ of dimension at most $n$ is denoted by ${\mathrm{Aff}}_n(X)$.
According to these notations, each affine subspace $W\in {\mathrm{Aff}}_n(X)$ can be obtained by shifting the appropriate linear subspace $W_0\in {\cal L}_n(X)$ by some constant vector $c\in X$: $W = c + W_0$.
For a set {$S\subset X$}, we define its affine span~\cite{R2008} as the set of all (finite) affine combinations of elements of {$S$}, i.e.
{\small
$$
{\mathrm{aff}}(S)=\Bigl\{ \sum_{j=1}^k \alpha_j x_j\,\vert\, k\in{\mathbb N},\, x_j\in S,\, \alpha_j\in{\mathbb R},\,\sum_{j=1}^k\alpha_j=1 \Bigr\}.
$$}
\hspace{-0.5em}The sum of two nonempty sets {$S_1,S_2\subset X$} is understood in the usual sense: {$S_1+S_2=\{x+y\,\vert\,x\in S_1,\,y\in S_2\}$}.

For a set $K\subset X$ and $n\ge 1$, the Kolmogorov $n$-width $D_n(K)_X$ is defined as follows~\cite{K1936,DU2013}:
\begin{equation}
D_n(K)_X := \inf_{W\in {\cal L}_n(X)} \sup_{x\in K} \inf_{\xi\in W} \|x- \xi \|_X.
\label{nwidth_D}
\end{equation}
In addition to the above definition, we also introduce the ``affine'' Kolmogorov $n$-width $d_n(K)_X$ by considering all affine subspaces $W\in {\mathrm{Aff}}_n(X)$~\cite{T1960}:
\begin{equation}
d_n(K)_X := \inf_{W\in {\mathrm{Aff}}_n(X)} \sup_{x\in K} \inf_{\xi\in W} \|x- \xi \|_X.
\label{nwidth}
\end{equation}

According to the above-introduced notations, one can also represent the Kolmogorov widths in the form
{
\begin{equation}
\begin{aligned}
D_n(K)_X &= 
\inf_{W\in {\cal L}_n(X)} \rho_X(K,W),\\
d_n(K)_X &=  
\inf_{W\in {\mathrm{Aff}}_n(X)} \rho_X(K,W).
\end{aligned}
\label{dn_sets}
\end{equation}
It is easy to see that
$
D_n(K)_X \ge d_n(K)_X
\ge D_{n+1}(K)_X\ge d_{n+1}(K)_X$ for each $n\ge 1$},
so that $D_n(K)_X$ and $d_n(K)_X$ are asymptotically equivalent as $n\to\infty$.

We will use the following convenient characterization of $d_n(K)_X$.

\begin{lemma}\label{lem1}
Let $d_n(K)_X $ be the affine Kolmogorov $n$-width of a nonempty set $K\subset X$.
Then:
\begin{enumerate}
\item[1)] for any $\varepsilon>0$, there exists a $W_\varepsilon\in {\mathrm{Aff}}_n(X)$ such that
$K\subset B_r (W_\varepsilon)$ with $r = d_n(K)_X + \varepsilon$;
\item[2)] if there is a number $\bar d_n\ge 0$ such that, for any $r>\bar d_n$, there exists a $W^r\in {\mathrm{ Aff}}_n(X)$ satisfying the property $K\subset B_r (W^r)$,
then $d_n(K)_X\le \bar d_n$.
\end{enumerate}
\end{lemma}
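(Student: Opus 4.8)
The plan is to read off both assertions directly from the definition of $d_n(K)_X$ as an infimum in~\eqref{dn_sets}, the only delicacy being to track the strict inequality built into the open neighborhood $B_r(W)$.

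For part~1) I would invoke the defining property of the infimum. Since $d_n(K)_X = \inf_{W\in\mathrm{Aff}_n(X)}\rho_X(K,W)$, for the given $\varepsilon>0$ there exists some $W_\varepsilon\in\mathrm{Aff}_n(X)$ with $\rho_X(K,W_\varepsilon) < d_n(K)_X + \varepsilon = r$. Unwinding $\rho_X(K,W_\varepsilon)=\sup_{x\in K}\rho_X(x,W_\varepsilon)$, this forces $\rho_X(x,W_\varepsilon)<r$ for every $x\in K$. By the definition of $\rho_X(x,W_\varepsilon)$ as an infimum of $\|x-\xi\|_X$ over $\xi\in W_\varepsilon$, each such $x$ admits a $\xi\in W_\varepsilon$ with $\|x-\xi\|_X<r$, i.e.\ $x\in B_r(W_\varepsilon)$. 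Hence $K\subset B_r(W_\varepsilon)$, as claimed.

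For part~2) I would run the implication in the reverse direction. Fix any $r>\bar d_n$; by hypothesis there is $W^r\in\mathrm{Aff}_n(X)$ with $K\subset B_r(W^r)$. Membership $x\in B_r(W^r)$ yields a $\xi\in W^r$ with $\|x-\xi\|_X<r$, so $\rho_X(x,W^r)<r$ for every $x\in K$, whence $\rho_X(K,W^r)=\sup_{x\in K}\rho_X(x,W^r)\le r$. Consequently $d_n(K)_X\le\rho_X(K,W^r)\le r$. Since $r>\bar d_n$ was arbitrary, passing to the infimum over all such $r$ gives $d_n(K)_X\le\bar d_n$.

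The only point requiring attention, and the nearest thing to an obstacle, is the mismatch between the strict inequality in the open neighborhood $B_r(\cdot)$ and the fact that a supremum may be attained only in the limit. This is precisely why part~1) exploits the slack $\varepsilon$, so that $d_n(K)_X+\varepsilon$ lies \emph{strictly} above the infimum and a strictly better approximant is guaranteed to exist; and why in part~2) the pointwise strict bounds $\rho_X(x,W^r)<r$ collapse only to the non-strict bound $\rho_X(K,W^r)\le r$ on the supremum, which is nevertheless enough once one lets $r\downarrow\bar d_n$.
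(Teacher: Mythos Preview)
Your proof is correct and follows essentially the same approach as the paper's own proof: both parts are read off directly from the infimum characterization~\eqref{dn_sets}, with the same unwinding of $\rho_X(K,W)$ into pointwise statements about $\|x-\xi\|_X$. Your treatment of part~2) is arguably slightly cleaner than the paper's, since you pass directly to the limit $r\downarrow\bar d_n$ rather than reintroducing an auxiliary $\varepsilon$, but this is a cosmetic difference only.
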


\begin{proof}
Part~1): if $d_n(K)_X$ is defined by~\eqref{nwidth}, then for any $\varepsilon>0$, there is a set $W_\varepsilon\in {\mathrm{Aff}}_n(X)$ such that $\sup_{x\in K} \rho_X(x,W_\varepsilon)<r$, $r=d_n(K)_X+\varepsilon$.
This implies that, for each $x\in K$, there is a $\xi_x\in W_\varepsilon$ such that $x\in B_r(\xi_x)$. Therefore, $K\subset B_r(W_\varepsilon)$.

Part~2): in order to prove that $d_n(K)_X\le \bar d_n$, it remains to show that, for any $\varepsilon>0$, there is a $W_\varepsilon\in{\mathrm{Aff}}_n(X)$ such that $\rho_X(K,W_\varepsilon)<\bar d_n + \varepsilon$ (because of~\eqref{dn_sets}).
The latter inequality holds with $W_\varepsilon=W^r$ and $r=\bar d_n + 2\varepsilon$ under the conditions of part~2). \end{proof}

\begin{remark}
Lemma~\ref{lem1} remains valid under the change of $d_n(K)_X $ and ${\mathrm{Aff}}_n(X)$ to $D_n(K)_X$ and ${\cal L}_n(X)$, respectively.
\end{remark}

{The following lemmas can be easily deduced from the definitions of affine span and the sum of sets; thus, we state them without proof.}

\begin{lemma}\label{lem2}
Let $W^1\in {\mathrm{Aff}}_n(X)$ and $W^2\in {\mathrm{Aff}}_m(X)$. Then
${\mathrm{aff}}(W^1 \cup W^2)\in {\mathrm{Aff}}_{n+m+1}(X)$.
\end{lemma}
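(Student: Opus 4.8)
The plan is to reduce the affine statement to a purely linear one by a single translation, after which the dimension bound becomes a routine count for sums of subspaces. First I would use the decomposition recalled in Section~\ref{sec_auxiliary}: write $W^1 = c_1 + W_0^1$ and $W^2 = c_2 + W_0^2$ with $W_0^1\in{\cal L}_n(X)$ and $W_0^2\in{\cal L}_m(X)$, and fix the anchor point $c_1\in W^1$.

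Next I would exploit the translation-equivariance of the affine span, namely $\mathrm{aff}(c+S)=c+\mathrm{aff}(S)$, which follows immediately from the constraint $\sum_j\alpha_j=1$ in the definition of $\mathrm{aff}$. Translating by $-c_1$ gives
\[
\mathrm{aff}(W^1\cup W^2) = c_1 + \mathrm{aff}\bigl(W_0^1 \cup (d + W_0^2)\bigr),\qquad d:=c_2-c_1.
\]
Since $0\in W_0^1$, the translated set $W_0^1\cup(d+W_0^2)$ contains the origin, and for any set $S\ni 0$ one has $\mathrm{aff}(S)=\mathrm{span}(S)$: an affine subspace through the origin is a linear subspace, and the identity $\sum_j\beta_j x_j = \sum_j\beta_j x_j + \bigl(1-\sum_j\beta_j\bigr)\cdot 0$ exhibits every linear combination as an affine combination. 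Thus the translated set equals the linear subspace $V:=\mathrm{span}\bigl(W_0^1\cup(d+W_0^2)\bigr)$.

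It then remains to bound $\dim V$. Using $\mathrm{span}(A\cup B)=\mathrm{span}(A)+\mathrm{span}(B)$ together with $\mathrm{span}(d+W_0^2)=\mathrm{span}(\{d\}\cup W_0^2)$ — the latter because $d=(d+0)$ and $w=(d+w)-(d+0)$ both lie in the span — I obtain
\[
V = W_0^1 + \mathrm{span}(\{d\}\cup W_0^2),\qquad \dim V \le n + (m+1) = n+m+1.
\]
Hence $\mathrm{aff}(W^1\cup W^2)=c_1+V\in{\mathrm{Aff}}_{n+m+1}(X)$, as claimed.

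The only point requiring care — and the source of the ``$+1$'' in the statement — is the offset vector $d=c_2-c_1$: even when the direction spaces $W_0^1$ and $W_0^2$ together span only $n+m$ dimensions, the relative displacement of the two varieties can contribute one extra dimension, which is precisely why the bound is $n+m+1$ rather than $n+m$. Equivalently, one may count affinely independent points: $W^1$ and $W^2$ are affine hulls of at most $n+1$ and $m+1$ points respectively, so their union is the affine hull of at most $n+m+2$ points, whose affine span has dimension at most $n+m+1$. I expect no genuine obstacle here; the argument is entirely mechanical once the translation to the origin is performed.
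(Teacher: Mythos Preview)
Your argument is correct. The paper does not actually prove this lemma: it states that Lemmas~\ref{lem2} and~\ref{lem3} ``can be easily deduced from the definitions of affine span and the sum of sets'' and omits the proofs, so there is nothing to compare against beyond the remark that your translation-to-the-origin reduction (or, equivalently, your point-counting variant with $n+m+2$ affinely independent points) is precisely the kind of routine verification the authors had in mind.
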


\begin{lemma}\label{lem3}
Let $W\in {\cal L}_n(X)$, $c\in X$, $x\in B_{\varepsilon_1}(c+W)$, $y\in B_{\varepsilon_2}(W)$.
Then $x+y \in B_{\varepsilon_1 + \varepsilon_2}(c+W)$.
\end{lemma}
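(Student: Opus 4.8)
The plan is to unwind the definition of an $\varepsilon$-neighborhood of a set in each hypothesis, produce an explicit candidate point of $c+W$ close to $x+y$, and then close the gap with a single application of the triangle inequality. The only structural fact I will need beyond this is that $W$ is a \emph{linear} subspace, hence closed under addition; this is precisely why the statement pairs $x$ with the shifted variety $c+W$ but $y$ with the unshifted subspace $W$, and it is worth keeping that asymmetry in view throughout.

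First I would use the hypothesis $x\in B_{\varepsilon_1}(c+W)$. By the definition $B_{\varepsilon_1}(c+W)=\cup_{\xi\in c+W}B_{\varepsilon_1}(\xi)$, there exists a point $\xi\in c+W$ with $\|x-\xi\|_X<\varepsilon_1$. Writing $\xi=c+w_1$ with $w_1\in W$ records this in the form most convenient for the sum. Similarly, from $y\in B_{\varepsilon_2}(W)$ I obtain some $w_2\in W$ with $\|y-w_2\|_X<\varepsilon_2$. At this stage both approximating points are pinned down explicitly.

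Next I would propose the natural candidate in $c+W$, namely $\zeta:=\xi+w_2=c+(w_1+w_2)$. Since $W$ is a linear subspace, $w_1+w_2\in W$, and therefore $\zeta\in c+W$. A direct estimate then gives
\begin{equation*}
\|(x+y)-\zeta\|_X=\|(x-\xi)+(y-w_2)\|_X\le \|x-\xi\|_X+\|y-w_2\|_X<\varepsilon_1+\varepsilon_2,
\end{equation*}
so $x+y\in B_{\varepsilon_1+\varepsilon_2}(\zeta)\subset B_{\varepsilon_1+\varepsilon_2}(c+W)$, which is the claim.

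There is no genuine obstacle here: the argument is a one-line triangle-inequality estimate once the neighborhoods are unpacked. If anything merits care, it is only the bookkeeping of which set is shifted by $c$ and which is not---had $y$ instead been taken near $c+W$, the sum would land near $2c+W$ rather than $c+W$, so the conclusion as stated depends essentially on the linearity (translation-free) structure of $W$ in the second hypothesis. I would therefore present the proof tersely, emphasizing that step as the substantive point.
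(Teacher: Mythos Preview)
Your proof is correct: unpacking the neighborhoods to obtain $\xi=c+w_1$ and $w_2$, forming $\zeta=c+(w_1+w_2)\in c+W$, and applying the triangle inequality is exactly the direct verification the paper has in mind. The paper itself omits the proof, stating only that the lemma ``can be easily deduced from the definitions,'' so your argument fills that gap in precisely the intended spirit.
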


{\em Corollary of Lemma~\ref{lem3}:} {Let $W\in{\cal L}_n(X)$ and $c\in X$.
Then $B_{\varepsilon_1}(c+W)+B_{\varepsilon_2}(W)\subset B_{\varepsilon_1 + \varepsilon_2}(c+W)$ for all $\varepsilon_1,\varepsilon_2>0$.}

\section{Variation of the $n$-width by nonlinear operators}\label{sec_nonlinear}
Let $X$ and $Y$ be Banach spaces, $\pi:X\to Y$ be a nonlinear operator of the form
\begin{equation}
\pi(x) = \ell_0 + \pi_0(x) + F(x),\quad \ell_0 \in Y,
\label{pi_general}
\end{equation}
where $\pi_0: X\to Y$ is a bounded linear operator, and certain growth conditions will be imposed on the nonlinear operator $F:X\to Y$ below.
Throughout the text, we will use the operator norm of $\pi_0$ defined by
$$
\|\pi_0\|=\sup_{x\neq 0} \frac{\|\pi_0(x)\|_Y}{\|x\|_X}.
$$

For a set $K\subset X$, $K\neq \emptyset$, we are interested in comparing $d_n(K)_X$ with the affine Kolmogorov $n$-width of {$\pi(K)$} in the $Y$ space,
i.e. the goal is to estimate
\begin{equation}
\begin{aligned}
d_n(\pi(K))_Y &:= \inf_{W_y\in {\mathrm{Aff}}_n(Y)} \sup_{y\in \pi(K)} \inf_{\eta\in W_y} \|y- \eta \|_Y \\
&= \inf_{W_y\in {\mathrm{Aff}}_n(Y)} \rho_Y(\pi(K),W_y),
\end{aligned}
\end{equation}
where the notations with $Y$ subscript stand for the distance with respect to the norm in $Y$.
In contrast to the paper~\cite{CD2016}, we do not assume that the corresponding maps are holomorphic{.}

\begin{lemma}\label{lem4}
Let $\pi:X\to Y$ be a nonlinear operator of the form~\eqref{pi_general} such that $F:X\to Y$ is bounded on a set $X_0\subset X$.
Then, for any $r_1>0$ and any $W\in {\mathrm{Aff}}_n(X)$, the following property holds:
\begin{equation}
\pi(B_{r_1}(W)\cap X_0) \subset B_{r_2}(W_y),
\label{pi_inclusion}
\end{equation}
where
\begin{equation}
W_y = \ell_0 + \pi_0(W)\in {\mathrm{Aff}}_n(Y),
\label{Wy}
\end{equation}
\begin{equation}
r_2 = r_1  \|\pi_0\| + \sup_{x\in X_0}\| F(x)\|_Y.
\label{r2_ineq}
\end{equation}
\end{lemma}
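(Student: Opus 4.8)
The plan is to exhibit the claimed affine subspace $W_y$ explicitly and then verify the inclusion~\eqref{pi_inclusion} pointwise. First I would confirm that the candidate $W_y = \ell_0 + \pi_0(W)$ really lies in $\mathrm{Aff}_n(Y)$. Writing $W = c + W_0$ with $c\in X$ and $W_0\in\mathcal{L}_n(X)$, linearity of $\pi_0$ gives $W_y = \bigl(\ell_0 + \pi_0(c)\bigr) + \pi_0(W_0)$. Since $\pi_0(W_0)$ is a linear subspace of $Y$ with $\dim \pi_0(W_0)\le \dim W_0 \le n$, and $\ell_0+\pi_0(c)$ is a fixed vector, $W_y$ is an affine subspace of dimension at most $n$, so $W_y \in \mathrm{Aff}_n(Y)$, as asserted in~\eqref{Wy}.

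Next I would fix an arbitrary image point $y\in\pi\bigl(B_{r_1}(W)\cap X_0\bigr)$, say $y = \pi(x)$ with $x\in B_{r_1}(W)\cap X_0$, and transport the approximation through $\pi_0$. By definition of $B_{r_1}(W)$ there is a $\xi\in W$ with $\|x-\xi\|_X < r_1$. The natural approximant of $y$ in $W_y$ is the image $\eta := \ell_0 + \pi_0(\xi)\in W_y$. Using the decomposition~\eqref{pi_general} together with linearity of $\pi_0$,
\begin{equation*}
\pi(x)-\eta = \pi_0(x-\xi) + F(x),
\end{equation*}
so the triangle inequality, the operator-norm bound for $\pi_0$, and $x\in X_0$ yield
\begin{equation*}
\|\pi(x)-\eta\|_Y \le \|\pi_0\|\,\|x-\xi\|_X + \|F(x)\|_Y \le \|\pi_0\|\,r_1 + \sup_{x\in X_0}\|F(x)\|_Y = r_2.
\end{equation*}
Hence $\pi(x)\in B_{r_2}(\eta)\subset B_{r_2}(W_y)$, and since $y$ was arbitrary, this establishes~\eqref{pi_inclusion}.

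I do not expect a genuine obstacle here: the statement is essentially the triangle inequality packaged with the observation that a bounded linear $\pi_0$ maps an $r_1$-neighborhood of $W$ into an $\bigl(r_1\|\pi_0\|\bigr)$-neighborhood of $\pi_0(W)$, while the boundedness of $F$ on $X_0$ contributes a uniform additive term. The only point demanding slight care is the strictness required by the ball $B_{r_2}(\cdot)$. Since $\|x-\xi\|_X < r_1$, the bound $\|\pi_0\|\,\|x-\xi\|_X \le \|\pi_0\|\,r_1$ is strict whenever $\|\pi_0\|>0$, which combined with $\|F(x)\|_Y\le \sup_{x\in X_0}\|F(x)\|_Y$ gives $\|\pi(x)-\eta\|_Y < r_2$ as needed; the degenerate case $\|\pi_0\|=0$ collapses $\pi$ to $\ell_0+F$ and $W_y$ to the single point $\{\ell_0\}$ and is treated directly. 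The conceptual heart of the argument is just the choice $\eta=\ell_0+\pi_0(\xi)$, namely transporting the same point $\xi$ that approximates $x$ in $W$ across $\pi_0$ into $W_y$.
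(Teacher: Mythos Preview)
Your proposal is correct and follows essentially the same approach as the paper's proof: pick $\xi\in W$ with $\|x-\xi\|_X<r_1$, set $\eta=\ell_0+\pi_0(\xi)\in W_y$, and bound $\|\pi(x)-\eta\|_Y$ via the triangle inequality and the operator norm of $\pi_0$. You are in fact slightly more careful than the paper in two places---the explicit verification that $W_y\in\mathrm{Aff}_n(Y)$ via $W=c+W_0$, and the discussion of strictness needed for the open ball $B_{r_2}$ (including the degenerate case $\|\pi_0\|=0$, which the paper's proof glosses over).
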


\begin{proof}
For a given $r_1>0$ and $W\in {\mathrm{Aff}}_n(X)$, our goal is to find an $r_2>0$ such that~\eqref{pi_inclusion} holds, where $W_y=\ell_0 + \pi_0(W)$. Note that $W_y\in {\mathrm{Aff}}_n(Y)$ as the operator $\pi_0:X
\to Y$ is linear. Let $x\in B_{r_1}(W)\cap X_0$. It means that there exist a $\xi\in W$ such that $\|x-\xi\|_X <r_1$ and $x\in X_0$. We estimate the distance between $\pi(x)$ and $\eta = \ell_0 + \pi_0(\xi)\in W_y$:
$$
\begin{aligned}
\|\pi(x)-\eta\|_Y =& \| \pi_0(x) + F(x) - \pi_0(\xi)\|_Y \\
&\le \|\pi_0\| \cdot \|x-\xi\|_X + \|F(x)\|_Y<r_2,
\end{aligned}
$$
provided that the constant $r_2$ is defined in~\eqref{r2_ineq}. We have shown that $\pi(x)\in B_{r_2}(W_y)$, which proves~\eqref{pi_inclusion}.
\end{proof}

\begin{remark}
If $W\in {\cal L}_n(X)$, then the affine subspace $W_y$ defined by~\eqref{Wy} may not belong to ${\cal L}_n(Y)$. This motivates our estimation of the affine $n$-width $d_n(\pi(K))_Y$ instead of $D_n(\pi(K))_Y$ in the following theorem.
\end{remark}

\begin{theorem}\label{thm1}
Let $K\neq \emptyset$ be a bounded subset of $X$, and
let $\pi:X\to Y$ be a nonlinear operator of the form~\eqref{pi_general} such that $F:X\to Y$ is bounded on $K$.
Then
\begin{equation}
d_n (\pi(K))_Y \le \|\pi_0\| d_n (K)_X  + \sup_{x\in K}\|F(x)\|_Y.
\label{dn_est}
\end{equation}
\end{theorem}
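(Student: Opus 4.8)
The goal is to bound $d_n(\pi(K))_Y$, and the natural strategy is to use Lemma~\ref{lem4} together with the characterization of the affine $n$-width provided by Lemma~\ref{lem1}. The plan is to start from a near-optimal affine subspace for $K$ in $X$, push it through the operator $\pi$, and then invoke part~2) of Lemma~\ref{lem1} to conclude the bound on the image side.

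Let me think about the mechanics. By part~1) of Lemma~\ref{lem1}, for any $\varepsilon > 0$ there is a $W_\varepsilon \in {\mathrm{Aff}}_n(X)$ with $K \subset B_{r_1}(W_\varepsilon)$ where $r_1 = d_n(K)_X + \varepsilon$. Since $K \subset X_0 := K$ (taking $X_0 = K$, on which $F$ is bounded), we have $K \subset B_{r_1}(W_\varepsilon) \cap K$, and therefore $\pi(K) \subset \pi(B_{r_1}(W_\varepsilon) \cap K)$. First I would apply Lemma~\ref{lem4} with this $W_\varepsilon$ and $X_0 = K$, which gives $\pi(B_{r_1}(W_\varepsilon) \cap K) \subset B_{r_2}(W_y)$ where $W_y = \ell_0 + \pi_0(W_\varepsilon) \in {\mathrm{Aff}}_n(Y)$ and $r_2 = r_1 \|\pi_0\| + \sup_{x\in K}\|F(x)\|_Y$. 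Chaining the inclusions yields $\pi(K) \subset B_{r_2}(W_y)$.

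Next I would substitute $r_1 = d_n(K)_X + \varepsilon$ into the expression for $r_2$, obtaining
\begin{equation}
r_2 = \|\pi_0\|\, d_n(K)_X + \sup_{x\in K}\|F(x)\|_Y + \varepsilon\|\pi_0\|. \nonumber
\end{equation}
Setting $\bar d_n := \|\pi_0\|\, d_n(K)_X + \sup_{x\in K}\|F(x)\|_Y$, I would verify that the hypothesis of part~2) of Lemma~\ref{lem1} holds for $\pi(K)$: for any $r > \bar d_n$, choosing $\varepsilon = (r - \bar d_n)/\|\pi_0\|$ (assuming $\|\pi_0\|>0$; the degenerate case $\|\pi_0\|=0$ is handled directly since then $r_2$ does not depend on $\varepsilon$) produces an affine subspace $W_y \in {\mathrm{Aff}}_n(Y)$ with $\pi(K) \subset B_r(W_y)$. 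Part~2) then immediately gives $d_n(\pi(K))_Y \le \bar d_n$, which is exactly the claimed inequality~\eqref{dn_est}.

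I do not anticipate a genuine obstacle here, since all the real work has been front-loaded into Lemmas~\ref{lem1} and~\ref{lem4}; the proof is essentially a clean composition of these two results. The only point demanding mild care is the bookkeeping of the $\varepsilon$-parameter: one must ensure that as $r \downarrow \bar d_n$ the corresponding $\varepsilon$ can be taken arbitrarily small, so that the family of neighborhoods $B_r(W_y)$ genuinely covers every radius strictly above $\bar d_n$, thereby matching the precise hypothesis of part~2) of Lemma~\ref{lem1}. A secondary technical nicety is the boundary case $\|\pi_0\| = 0$, where $\pi$ reduces to a constant plus a bounded nonlinear term; this should be noted but dispatched in one line.
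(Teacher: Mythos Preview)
Your proposal is correct and follows essentially the same route as the paper: invoke part~1) of Lemma~\ref{lem1} to obtain a near-optimal $W_\varepsilon\in{\mathrm{Aff}}_n(X)$, apply Lemma~\ref{lem4} with $X_0=K$ to push $\pi(K)$ into a neighborhood of $W_y=\ell_0+\pi_0(W_\varepsilon)$, and conclude via part~2) of Lemma~\ref{lem1}. Your treatment is in fact slightly more explicit than the paper's in tracking the $\varepsilon$-dependence and in flagging the degenerate case $\|\pi_0\|=0$.
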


\begin{proof}
Since $K$ is bounded, $d_n(K)_X<\infty$ and $d_n(\pi(K))_Y<\infty$ under the assumptions of this theorem.

Due to Lemma~\ref{lem1}, for each $\varepsilon>0$, there exists a $W_\varepsilon\in {\mathrm{Aff}}_n(X)$ such that
$K\subset B_{r_1} (W_\varepsilon)$ with $r_1 = d_n(K)_X + \varepsilon$.
Moreover, Lemma~\ref{lem4} implies that
\begin{equation}
\pi(K) = \pi(B_{r_1}({W_\varepsilon})\cap K) \subset B_{r_2}(W_y),
\label{r1r2_inclusion}
\end{equation}
where $W_y\in {\mathrm{Aff}}_n(Y)$ and $r_2=    r_1 \|\pi_0\|+ \sup_{x\in K}\| F(x)\|_Y  $.

Then, by exploiting inclusion~\eqref{r1r2_inclusion} and part~2) of Lemma~\ref{lem1} with $\bar d_n = \|\pi_0\| d_n (K)_X  + \sup_{x\in K}\|F(x)\|_Y$,
we obtain estimate~\eqref{dn_est}.
\end{proof}

\begin{remark}
Assume that $0\in K$, $F(0)=0$, and that the operator $F$ satisfies the Lipschitz condition on $K$, i.e. there exists a constant $\mu\ge 0$:
$$
\|F(x')-F(x'')\|_Y \le \mu \|x'-x''\|_X\quad\text{for all}\;\; x',x''\in K.
$$
Then it is clear that inequality~\eqref{dn_est} yields the estimate
$$
d_n (\pi(K))_Y \le \|\pi_0\| d_n (K)_X  + \mu \, \sup_{x\in K}\|x\|_X.
$$
\end{remark}

\begin{theorem}\label{thm2}
Let $\pi:X\to Y$ be a nonlinear operator of the form~\eqref{pi_general}, and let $K\neq \emptyset$ be a subset of $X$. Then the following estimate holds for each $n,m\in \mathbb N$:
\begin{equation}
d_{n+m}(\pi(K))_Y \le \|\pi_0\| d_n(K)_X + D_m (F(K))_Y.
\label{d_D_estimate}
\end{equation}
\end{theorem}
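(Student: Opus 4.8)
The plan is to split $\pi$ into its dominant affine part $x\mapsto \ell_0+\pi_0(x)$ and its nonlinear part $F$, to approximate the image of $K$ under the affine part by an affine subspace of dimension at most $n$ and the image $F(K)$ by a \emph{linear} subspace of dimension at most $m$, and then to merge the two approximants into a single affine subspace of dimension at most $n+m$. If the right-hand side of~\eqref{d_D_estimate} is infinite there is nothing to prove, so I would assume $d_n(K)_X<\infty$ and $D_m(F(K))_Y<\infty$ and fix an arbitrary $\varepsilon>0$. Note that, unlike Theorem~\ref{thm1}, no boundedness of $F$ is required here.

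\textbf{Approximation of the affine part.} By part~1) of Lemma~\ref{lem1} there is a $W_\varepsilon=c_0+W_0\in{\mathrm{Aff}}_n(X)$, with $W_0\in{\cal L}_n(X)$ and $c_0\in X$, such that $K\subset B_{r_1}(W_\varepsilon)$ and $r_1=d_n(K)_X+\varepsilon$. Setting $c=\ell_0+\pi_0(c_0)\in Y$ and $L=\pi_0(W_0)\in{\cal L}_n(Y)$, the estimate already used in the proof of Lemma~\ref{lem4} (for $x\in K$ take $\xi\in W_\varepsilon$ with $\|x-\xi\|_X<r_1$, so that $\|\pi_0(x)-\pi_0(\xi)\|_Y\le\|\pi_0\|r_1$ and $\ell_0+\pi_0(\xi)\in\ell_0+\pi_0(W_\varepsilon)=c+L$) gives $\ell_0+\pi_0(K)\subset B_{\|\pi_0\|r_1}(c+L)$, where $c+L\in{\mathrm{Aff}}_n(Y)$. \textbf{Approximation of the nonlinear part.} By the Remark following Lemma~\ref{lem1}, applied with $D_m$ and ${\cal L}_m(Y)$, there is a $V_\varepsilon\in{\cal L}_m(Y)$ with $F(K)\subset B_s(V_\varepsilon)$ and $s=D_m(F(K))_Y+\varepsilon$.

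\textbf{Combination.} Put $M=L+V_\varepsilon$. As $L,V_\varepsilon$ are linear subspaces with $\dim L\le n$ and $\dim V_\varepsilon\le m$, we get $M\in{\cal L}_{n+m}(Y)$ and $c+M\in{\mathrm{Aff}}_{n+m}(Y)$. Since $L\subset M$ and $V_\varepsilon\subset M$, the two inclusions above enlarge to $\ell_0+\pi_0(K)\subset B_{\|\pi_0\|r_1}(c+M)$ and $F(K)\subset B_s(M)$. Because $\pi(x)=(\ell_0+\pi_0(x))+F(x)$, so that $\pi(K)\subset(\ell_0+\pi_0(K))+F(K)$, the Corollary of Lemma~\ref{lem3} with the common linear subspace $M$ yields
$$\pi(K)\subset B_{\|\pi_0\|r_1}(c+M)+B_s(M)\subset B_{\|\pi_0\|r_1+s}(c+M).$$
The radius equals $\|\pi_0\|d_n(K)_X+D_m(F(K))_Y+(\|\pi_0\|+1)\varepsilon$. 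Letting $\bar d_{n+m}=\|\pi_0\|d_n(K)_X+D_m(F(K))_Y$ and, for a prescribed $r>\bar d_{n+m}$, choosing $\varepsilon>0$ small enough that this radius is at most $r$, we obtain $\pi(K)\subset B_r(c+M)$ with $c+M\in{\mathrm{Aff}}_{n+m}(Y)$; part~2) of Lemma~\ref{lem1}, applied in $Y$ to the set $\pi(K)$ with index $n+m$, then gives~\eqref{d_D_estimate}.

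The main obstacle is the combination step. Lemma~\ref{lem3} and its corollary merge neighborhoods of the \emph{same} underlying linear subspace, whereas the two preceding steps produce neighborhoods of the \emph{different} subspaces $c+L$ and $V_\varepsilon$. The resolution is to pass to the subspace sum $M=L+V_\varepsilon$, whose dimension is at most $n+m$, and to enlarge both neighborhoods to $M$. This is exactly why the nonlinear part is measured by the linear width $D_m$ rather than the affine width $d_m$: the Corollary of Lemma~\ref{lem3} is tailored to a perturbation lying near a subspace \emph{through the origin}, so its linear part can be absorbed into $L$ without introducing a translation and without raising the dimension. By contrast, combining the two approximants through Lemma~\ref{lem2} would yield an element of ${\mathrm{Aff}}_{n+m+1}(Y)$ and cost one extra dimension, so the sharp index $n+m$ in~\eqref{d_D_estimate} genuinely relies on the subspace-sum argument.
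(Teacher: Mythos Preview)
Your proof is correct and follows essentially the same route as the paper: choose $W_\varepsilon\in\mathrm{Aff}_n(X)$ and $V_\varepsilon\in\mathcal{L}_m(Y)$ via Lemma~\ref{lem1}, push the affine part through $\pi_0$ to get $c+L\in\mathrm{Aff}_n(Y)$, form the subspace sum $M=L+V_\varepsilon\in\mathcal{L}_{n+m}(Y)$, and apply the Corollary of Lemma~\ref{lem3} followed by part~2) of Lemma~\ref{lem1}. Your added remarks on why $D_m$ (rather than $d_m$) appears, and why Lemma~\ref{lem2} would cost an extra dimension, are accurate commentary but not a different argument.
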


\begin{proof}
For an arbitrary $\varepsilon>0$, we consider the sets $W_\varepsilon^\pi\in{\mathrm{Aff}}_n (X)$ and $W_\varepsilon^F\in{\cal L}_m (Y)$ such that $K\subset B_{r_x}( W_\varepsilon^\pi )$ and $F(K) \subset B_{r_y} (W_\varepsilon^F) $, where
$r_x = d_n(K)_X+\varepsilon$ and $r_y= D_m(F(K))_Y + \varepsilon$. Such sets $W_\varepsilon^\pi$ and $W_\varepsilon^F$ exist because of Lemma~\ref{lem1}.

Then we define $W_y = \ell_0 + \pi_0(W_\varepsilon^\pi)\in{\mathrm{Aff}}_n(X)$ and note that $W_y = c+ W_y^0$ for some $c\in Y$ and $W_y^0 \in {\cal L}_n(Y)$.
Due to the choice of $W_\varepsilon^\pi$, for any $x\in K$, there is a $\xi\in W_\varepsilon^\pi$ such that $\|x-\xi\|_X<r_x$.
By applying the affine operator $\pi_1(x) = \ell_0 + \pi_0(x)$ to $x$ and $\xi$, we get:
$$
\|\pi_1(x)-\pi_1(\xi)\|_Y \le \|\pi_0\|\cdot \|x-\xi\|_X < r_x\|\pi_0\| .
$$
As $\eta = \pi_1(\xi)\in W_y$, the above inequality implies that $\pi_1(x) \in B_{r_x\|\pi_0\| }(W_y )$ for each $x\in K$,
i.e. $\pi_1(K) \subset B_{r_x \|\pi_0\| }(W_y )$.

Let $\tilde W = W_y^0 + W_\varepsilon^F\in {\cal L}_{n+m}(Y)$. Because $  W_\varepsilon^F\subset\tilde W$ and  $W_y^0 \subset \tilde W$, we have:
$F(K)\subset B_{r_y}(\tilde W)$ and $\pi_1(K)\subset  B_{r_x \|\pi_0\| } (c+\tilde W)$.
Then the corollary of Lemma~\ref{lem3} implies that
\begin{equation}
\pi(K) \subset \pi_1(K) + F(K) \subset B_r(c+\tilde W)\in {\mathrm{Aff}}_{n+m}(Y),
\label{inclusion_W}
\end{equation}
where
{\small
$$
r= r_x \|\pi_0\| + r_y = \|\pi_0\| d_n(K)_X + D_m (F(K))_Y + \varepsilon(1+\|\pi_0\|).
$$}
\hspace{-0.4em}As $\varepsilon>0$ can be chosen arbitrarily, the obtained inclusion~\eqref{inclusion_W} together with part~2) of Lemma~\ref{lem1} yields the assertion of Theorem~\ref{thm2}.
\end{proof}

\begin{remark}
The results of this section are valid for Banach spaces over both the field of real numbers and the field of complex numbers. In Section~\ref{sec_schrodinger}, we will highlight the application of this theory in complex-valued spaces.
\end{remark}

\section{Bilinear control systems}\label{sec_bilinear}
Consider a single-input control system in a Hilbert space~$H$:
\begin{equation}
\begin{aligned}
&\dot x(t) = (A+u(t)B)x(t),\quad x(t)\in H,\;t\in [0,T],\\
&x(0)=x_0\in H,
\label{bilinear_cs}
\end{aligned}
\end{equation}
where $A:D(A)\to H$ is the infinitesimal generator of a $C_0$-semigroup $\{e^{tA}\}_{t\ge 0}$ of bounded linear operators on $H$,
$B:H\to H$ is a bounded linear operator, and $u(\cdot)\in L^2(0,T)$ is the control.

The solutions of the Cauchy problem~\eqref{bilinear_cs} are represented by the Volterra series (cf.~\cite{lamnabhi1995volterra,GGB2015}) as follows:
\begin{equation}
\begin{aligned}
x(t) = & e^{tA}x_0 +  \sum_{k=1}^\infty V_k(t;u),\quad t\in [0,T],\\
V_1(t;u)=&  \int_0^t w_1(t,\sigma_1) u(\sigma_1) d\sigma_1,\\
V_k(t;u)= & \int_0^t \int_0^{\sigma_k} ... \int_0^{\sigma_2} w_k(t,\sigma_1,...,\sigma_k) \\
& \times u(\sigma_1) ... u(\sigma_k) d\sigma_1 ... d\sigma_k,\; k=2,3,...,
\end{aligned}
\label{Volterra}
\end{equation}
where the kernels are
\begin{equation}
w_k(t,\sigma_1,...,\sigma_k) = e^{(t-\sigma_k)A} B e^{(\sigma_k-\sigma_{k-1})A} B ... B e^{\sigma_1 A} x_0,
\label{kernels}
\end{equation}
for $0\le \sigma_1 \le ...\le \sigma_k\le t$.
Note that the above expansion can also be adapted for distributed parameter systems with more general nonlinearities. In particular, a local representation of the solutions by Volterra series for the Schr\"odinger partial differential equation with cubic nonlinearity has been developed in~\cite{GGBL2015}.
A procedure for computing the Volterra kernels for a nonlinear Burgers' equation with the fractional derivative of order $1/2$ is described in~\cite{HH2004}.

For a given set of admissible controls $K\subset L^2(0,T)$ and the initial value $x_0\in H$, the reachable set of system~\eqref{bilinear_cs} at time $T$ is defined as
$$
{\cal R}_T(x_0) = \{x(T)\,\vert \, x(0)=x_0,\,u(\cdot) \in K\}\subset H.
$$
Note that system~\eqref{bilinear_cs} is never controllable in $H$ because its equilibrium $x=0$ is an invariant set,
with
${\cal R}_T(0)=\{0\}$ for each $T>0$.
{
The reachable sets of infinite-dimensional systems of the form~\eqref{bilinear_cs} may exhibit highly complex structures (see, e.g.,~\cite{BR2014} and references therein).
This complexity remains prominent in systems that are neither exactly nor approximately controllable.
In this context, understanding the sensitivity of the reachable set
under a specific
class of admissible controls becomes crucial.
We explore this sensitivity using the concept of Kolmogorov $n$-width, which formalizes the relationship between the $n$-width of the admissible controls and that of the corresponding reachable set.
A direct estimation of $d_n({\cal R}_T(x_0))_H$ itself (without its relation to $d_n(K)_{L^2(0,T)}$) fails to reveal detailed insights into this sensitivity and the underlying complexity.
This motivates our present analysis, where
}
we apply the results of Section~\ref{sec_nonlinear} to characterize the $n$-width of ${\cal R}_T(x_0)$.

\begin{theorem}\label{thm3}
Let $\|e^{tA}\|\le M e^{\omega t}$ with some constants $M\ge 1$, $\omega\in\mathbb R$, and let $K\subset L^2(0,T)$ be a nonempty set of admissible controls such that $\|u(\cdot)\|_{L^2(0,T)}\le r$ for each $u(
\cdot)\in K$. If $M \|B\| \sqrt{T}r<1$, then
\begin{equation}
\begin{aligned}
d_n({\cal R}_T(x_0))_H \le & M^2 \|B\| \|x_0\| \sqrt{T} e^{\omega T} d_n(K)_{L^2(0,T)} \\
& + \frac{M^3 \|B\|^2 \|x_0\| T e^{\omega T}r^2}{1-M \|B\| \sqrt{T}r}\;\text{for each}\;n\in{\mathbb N}.
\end{aligned}
\label{dn_bilinear}
\end{equation}
\end{theorem}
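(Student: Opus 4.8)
The plan is to realize the endpoint map of~\eqref{bilinear_cs} as an operator of the form~\eqref{pi_general} and then apply Theorem~\ref{thm1}. Define $\pi:L^2(0,T)\to H$ by $\pi(u)=x(T)$, where $x(\cdot)$ solves~\eqref{bilinear_cs}; by construction $\pi(K)={\cal R}_T(x_0)$. Using the Volterra expansion~\eqref{Volterra}, I would split $\pi$ into its affine part and its nonlinear remainder,
\begin{equation*}
\pi(u)=\underbrace{e^{TA}x_0}_{\ell_0}+\underbrace{V_1(T;u)}_{\pi_0(u)}+\underbrace{\sum_{k=2}^\infty V_k(T;u)}_{F(u)},
\end{equation*}
so that $\ell_0\in H$ is constant, $\pi_0$ is the bounded linear operator $u\mapsto V_1(T;u)$, and $F$ collects all higher-order terms. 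It then suffices to estimate $\|\pi_0\|$ and $\sup_{u\in K}\|F(u)\|_H$ and to invoke~\eqref{dn_est}.

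First I would bound the first kernel. Since $w_1(T,\sigma_1)=e^{(T-\sigma_1)A}Be^{\sigma_1 A}x_0$, the hypothesis $\|e^{tA}\|\le Me^{\omega t}$ and the fact that the two semigroup exponents add up to $T$ give $\|w_1(T,\sigma_1)\|_H\le M^2\|B\|\,\|x_0\|e^{\omega T}$ uniformly in $\sigma_1\in[0,T]$. Pulling the norm inside the integral and applying the Cauchy--Schwarz inequality $\int_0^T|u(\sigma_1)|\,d\sigma_1\le\sqrt{T}\,\|u\|_{L^2(0,T)}$ then yields $\|\pi_0(u)\|_H\le M^2\|B\|\,\|x_0\|\sqrt{T}e^{\omega T}\,\|u\|_{L^2(0,T)}$, i.e.\ $\|\pi_0\|\le M^2\|B\|\,\|x_0\|\sqrt{T}e^{\omega T}$, which is exactly the coefficient of $d_n(K)_{L^2(0,T)}$ in~\eqref{dn_bilinear}.

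For the nonlinear part I would treat each $V_k(T;u)$, $k\ge 2$, in the same way. The kernel~\eqref{kernels} contains $k$ factors of $B$ and $k+1$ semigroup factors whose exponents telescope to $T$, so $\|w_k\|_H\le M^{k+1}\|B\|^k\|x_0\|e^{\omega T}$ on the whole simplex. Bounding the simplex integral by the integral over the full cube $[0,T]^k$ (the integrand being nonnegative) and again using Cauchy--Schwarz gives
\begin{align*}
\|V_k(T;u)\|_H &\le M^{k+1}\|B\|^k\|x_0\|e^{\omega T}\Bigl(\int_0^T|u(\sigma)|\,d\sigma\Bigr)^{k}\\
&\le M\|x_0\|e^{\omega T}\bigl(M\|B\|\sqrt{T}r\bigr)^{k}
\end{align*}
for every $u\in K$. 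Here the smallness assumption $q:=M\|B\|\sqrt{T}r<1$ enters decisively: summing the geometric tail $\sum_{k\ge 2}q^k=q^2/(1-q)$ produces the closed form $\sup_{u\in K}\|F(u)\|_H\le M\|x_0\|e^{\omega T}q^2/(1-q)$, which equals the second summand in~\eqref{dn_bilinear}. Substituting the two bounds into~\eqref{dn_est} completes the argument.

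The step I expect to be the main obstacle is controlling the nonlinear tail: establishing the uniform kernel bound with telescoping exponents and, above all, obtaining a convergent closed-form estimate for $\sum_{k\ge2}\|V_k(T;u)\|_H$. This is precisely where the hypothesis $M\|B\|\sqrt{T}r<1$ is indispensable, both to guarantee that $F$ is bounded on $K$ (as required by Theorem~\ref{thm1}) and to collapse the series into the stated geometric expression. I note that exploiting the symmetry of the integrand over the simplex would replace the crude cube bound by a factor $1/k!$ and hence yield a sharper estimate of the form $e^{q}-1-q$; however, the geometric bound already gives exactly the inequality~\eqref{dn_bilinear} to be proved.
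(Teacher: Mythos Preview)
Your proposal is correct and follows essentially the same approach as the paper's proof: the same splitting of the endpoint map via the Volterra series, the same kernel bound $\|w_k\|\le M^{k+1}\|B\|^k\|x_0\|e^{\omega T}$ obtained from telescoping exponents, the same Cauchy--Schwarz estimate for the iterated integrals (the paper also arrives at $T^{k/2}\|u\|_{L^2}^k$, though it states the simplex-to-cube step more tersely), the same geometric summation under $M\|B\|\sqrt{T}r<1$, and the same appeal to Theorem~\ref{thm1}. Your closing remark about the sharper $1/k!$ factor is a valid side observation that the paper does not make.
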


\begin{proof}
Let us define the operator $\pi:L^2(0,T)\to H$ by the rule
$
\pi(u) = \ell_0 + \pi_0(u) + F(u)
$,
where
$$
\begin{aligned}
\ell_0 &= e^{TA}x_0, \\
\pi_0(u) &= \int_0^T  e^{(T-\sigma_1)A}B e^{\sigma_1 A} x_0 u(\sigma_1) d\sigma_1,\\
F(u) &= \sum_{k=2}^\infty\int_0^T \int_0^{\sigma_k} ... \int_0^{\sigma_2} w_k(T,\sigma_1,...,\sigma_k) \\
& \times u(\sigma_1) \cdots u(\sigma_k) d\sigma_1 ... d\sigma_k,
\end{aligned}
$$
and the kernels $w_k$ are defined in~\eqref{kernels}. By construction,
$
{\cal R}_T(x_0) = \{ \pi(u)\,\vert\, u\in K\}
$. By estimating the norm of $\pi_0(u)$ with the use of the Cauchy--Schwarz inequality, we conclude that
$$
\begin{aligned}
&\|\pi_0(u)\|  \le \int_0^T \|e^{(T-\sigma_1)A}Be^{\sigma_1 A}x_0\|\cdot | u(\sigma_1)| d\sigma_1 \\
&\le \left(\int_0^T \|e^{(T-\sigma_1)A}Be^{\sigma_1 A}\|^2 d\sigma_1\right)^{1/2}\|x_0\| \cdot \|u(\cdot)\|_{L^2(0,T)}.
\end{aligned}
$$
This implies
\begin{equation}
\|\pi_0\| \le M^2 \|B\| \|x_0\| \sqrt{T} e^{\omega T}.
\label{pi0ineq}
\end{equation}
{T}he kernels $w_k$ defined in equation~\eqref{kernels} are estimated as
\begin{equation}\label{kernel_est}
\| w_k(t,\sigma_1,...,\sigma_k) \| \le \|B\|^k M^{k+1} \|x_0\| e^{\omega t},\; \quad k=2,3,\,...\, ,
\end{equation}
and, for $t\in [0,T]$, the iterated integrals satisfy the property
\begin{equation}\label{int_est}
\int_0^t\int_0^{\sigma_k} ... \int_0^{\sigma_2} |u(\sigma_1) \cdots u(\sigma_k)| d\sigma_1 ... d\sigma_k \le t^{\frac{k}{2}} \|u(\cdot)\|^k_{L^2(0,t)}.
\end{equation}
The above estimate follows from the Cauchy--Schwarz inequality:
{\small
$$
\int_0^t |u(\sigma)|d\sigma \le \left(\int_0^t d\sigma\right)^{\frac12} \left(\int_0^t u^2(\sigma)d\sigma\right)^{\frac12}
 = \sqrt{t}\|u(\cdot)\|_{L^2(0,t)}.
$$}

By combining inequalities~\eqref{kernel_est} and~\eqref{int_est}, we conclude that
$$
\|V_k(t;u)\| \le M e^{\omega t} \|x_0\| \left(M\|B\|\sqrt{t}\|u(\cdot)\|_{L^2(0,T)}\right)^k,
$$
for all $k=2,3,...$ .
Thus, if $M \|B\| \sqrt{T}r<1$, then
\begin{equation}
\|F(u)\| \le \frac{M^3 \|B\|^2 T e^{\omega T} r^2}{1-M \|B\| \sqrt{T}r}\|x_0\|.
\label{Fineq}
\end{equation}
Finally, Theorem~\ref{thm1} together with inequalities~\eqref{pi0ineq},~\eqref{Fineq} implies the assertion of Theorem~\ref{thm3}.
\end{proof}
{
\begin{remark}\label{rem_T}
The upper bounds of the iterated integrals~\eqref{int_est} increase polynomially as $t$ increases.
Consequently, the  estimate obtained in~\eqref{dn_bilinear}  depends essentially on the time horizon $T$, under the assumptions of Theorem~\ref{thm3}.
\end{remark}
\begin{remark}\label{rem_const}
The first term of the right-hand side of~\eqref{dn_bilinear} tends to zero as $d_n(K)_{L^2(0,T)}\to 0$ with $n\to \infty$.
For large $n$, the  estimate~\eqref{dn_bilinear} is dominated by its last term, which is proportional to the initial value norm $\|x_0\|$.
Hence,  Theorem~\ref{thm3} can be used if the influence of the last term in~\eqref{dn_bilinear} is acceptably small in the context of a required description of the reachable set.
The asymptotic behavior of $d_n({\cal R}_T(x_0))_H$ can be further refined by Theorem~\ref{thm2}, where the right-hand side of~\eqref{d_D_estimate} tends to zero for large $n$ and $m$.
We leave the  potential extensions of Theorem~\ref{thm2}, which involve characterizing endpoint maps for nonlinear control systems, to future research.
\end{remark}
}

\section{Application to the Euler--Bernoulli beam controlled by a contraction force}\label{sec_beam}

\subsection{PDE-based model}
Consider a controlled Euler--Bernoulli beam model of length $L$ with simply supported boundary conditions:
\begin{equation}\label{EB_PDE}
\frac{\partial^2 w}{\partial t^2}+ a^2 \frac{\partial^4 w}{\partial z^4} = u(t) \frac{\partial}{\partial z} \left( \chi_{[z_1,z_2]}(z)\frac{\partial w}{\partial z}\right),\; z\in(0,L),
\end{equation}
\begin{equation}\label{EB_BC}
\left.w\right|_{z=0} = \left.w\right|_{z=L} = 0,\; \left. \frac{\partial^2 w}{\partial z^2}\right|_{z=0} =
\left. \frac{\partial^2 w}{\partial z^2}\right|_{z=L} = 0,
\end{equation}
where $w=w(t,z)$ represents the transverse displacement of the beam's centerline at a position $z\in[0,L]$ and time~$t$,
$a=\sqrt{EI/\rho_0}>0$, $E$ is the Young's modulus, $I$ denotes the moment of inertia of the beam's cross-section,
$\rho_0$ is the mass per unit length of the beam. The control input $u(t)$ is proportional to the contraction force $f(t)$ applied to a segment of the beam defined by $z\in [z_1,z_2]$: $u(t)=f(t)/\rho_0$, and $\chi_{[z_1,z_2]}(z)$ is the indicator function of the interval $[z_1,z_2]$ within the beam's length $[0,L]$.
The derivative of the indicator function in the right-hand side of~\eqref{EB_PDE} is not formally defined at $z=z_1$ and $z=z_2$.
Therefore, in further analysis, $\chi_{[z_1,z_2]}(z)$ can either be replaced with a differentiable approximation or the problem defined in equations~\eqref{EB_PDE}--\eqref{EB_BC} can be addressed using a weak formulation or Galerkin's method.

Note that the partial differential equation~\eqref{EB_PDE} and the boundary conditions~\eqref{EB_BC} can be derived from Hamilton's principle, which involves the kinetic energy $\cal K$ and the potential energy $\cal U$:
$$
{\cal K} = \frac12 \int_0^L \rho_0 \left(\frac{\partial w}{\partial t}\right)^2 dz,\;
{\cal U} =\frac12 \int_0^L EI \left(\frac{\partial^2 w}{\partial z^2}\right)^2 dz.
$$
Indeed, if $w(t,z)$ describes the motion of the considered beam under the action of the force $f(t)$ for $t\in [0,T]$, then
the variation of the action functional, according to Hamilton's principle~\cite{B2009}, satisfies:
\begin{equation}\label{Hamilton}
\delta \int_{0}^{T} ({\cal K}-{\cal U})dt + \int_{0}^{T} \delta {\cal A}\,dt =0,
\end{equation}
for each admissible variation $\delta w = \delta w(t,z)$ that vanishes at $t=0$, $t=T$, $z=0$, and $z=L$.
The work $\delta {\cal A}$ done by the control force $f(t)$ is
\begin{equation}\label{dA}
\delta {\cal A} = - f(t)\int_0^L \chi_{[z_1,z_2]}(z) \frac{\partial w(t,z)}{\partial z} \frac{\partial\, \delta w(t,z)}{\partial z} dz.
\end{equation}
This action can be implemented by a piezoactuator laminated to the beam between $[z_1,z_2]$, assuming the thickness is negligible.
Values of $f(t)>0$ correspond to a contraction force, while $f(t)<0$ represents an expansive force.
In cases where the beam's thickness is not negligible, the bending moment induced by the piezoactuator can be modeled using a linear Euler--Bernoulli beam equation (see, for example,~\cite{HM2001}).
The bilinear control system~\eqref{EB_PDE}--\eqref{EB_BC} is derived from the variational formulation~\eqref{Hamilton}--\eqref{dA}  by utilizing the integration by parts technique and applying the fundamental lemma of calculus of variations.

\subsection{Galerkin's method}
For further application of Galerkin's method and the results developed in the previous section, we
introduce the linear differential operator $U:D(U)\to L^2(0,L)$ such that
\begin{equation}\label{U_op}
\begin{aligned}
 (U \varphi) (z) = \frac{d^4 \varphi(z)}{dz^4},\, & \; z\in(0,L),\\
 D(U)=\bigl\{ \varphi\in H^4(0,L)\,\vert\,  & \varphi(0)=\varphi(L)=0,\\
 & \varphi''(0)=\varphi''(L)=0 \bigr\}.
\end{aligned}
\end{equation}
The eigenvalues and normalized eigenfunctions of $U$ are, respectively,
\begin{equation}\label{eig}
\lambda_n = \left(\frac{\pi n}{L}\right)^4,\; \varphi_n(z) = \sqrt{\frac{2}L}\sin\left(\frac{\pi n z}{L}\right),\;
n=1,2,...\; .
\end{equation}
It is well-known~\cite{LGM1999} that $\{\varphi_n\}_{n=1}^\infty$ is an orthonormal basis in $L^2(0,L)$.
We seek a solution $w(t,z)$ to the problem defined by equations~\eqref{EB_PDE}--\eqref{EB_BC} in the form of a series expansion
$$
w(t,z) = \sum_{j=1}^\infty q_j(t) \varphi_j(z).
$$
We apply Galerkin's method (cf.~\cite{Zu2015}) by substituting the series $w(t,z)$ into equation~\eqref{EB_PDE} and multiplying both sides of equation~\eqref{EB_PDE} by $\varphi_n(z)$.
After computing the inner products in the $L^2(0,L)$ space and leveraging the orthogonality of eigenfunctions~\eqref{eig}, we obtain the following infinite system of differential equations:
\begin{equation}\label{q_sys}
\ddot q_n(t) = -\omega_n^2 q_n(t) + u(t) \sum_{j=1}^\infty {\omega_j} b_{nj} q_j(t),\;n=1,2,...\, ,
\end{equation}
where
\begin{equation}\label{omega_b}
\begin{aligned}
\omega_n = a \left(\frac{\pi n}{L}\right)^2>0,\,
 b_{nj} = -\frac{1}{\omega_j}\left<\chi_{[z_1,z_2]} \varphi_j',\varphi_n'\right>_{L^2(0,L)}.
 \end{aligned}
\end{equation}
To rewrite the control system~\eqref{q_sys} in an abstract form, we introduce
the infinite vectors
$
\xi(t)=(\xi_1(t),\xi_2(t),...)^\top\in \ell^2$ and $\eta(t)=(\eta_1(t),\eta_2(t),...)^\top\in \ell^2$,
with the condition that their components satisfy:
$
\xi_n(t) = \omega_n q_n(t)$, $\eta_n(t) = \dot q_n(t)$, $ n=1,2,$ ... .
As a result, the bilinear control system~\eqref{q_sys} takes the following form:
\begin{equation}\label{beam_abstract}
\dot x(t) = (A+ u(t) B)x(t), \quad x(t)\in H=\ell^2 \times \ell^2,
\end{equation}
where the state vector is $x(t)=\begin{pmatrix}\xi(t) \\ \eta(t)\end{pmatrix}\in H$ and the operators $A$ and $B$ are given by their infinite block matrices
\begin{equation}\label{x_A_B}
A = \begin{pmatrix} 0 & \Omega \\ - \Omega & 0\end{pmatrix},\; B = \begin{pmatrix}0 & 0 \\
b & 0\end{pmatrix},
\end{equation}
$$
\Omega = \begin{pmatrix}\omega_1 & 0 & ... \\
0 & \omega_2 & ...\\
\vdots & \vdots & \ddots
\end{pmatrix},\; b= \begin{pmatrix} b_{11} & b_{12} & \dots \\
b_{21} & b_{22} & \dots \\
\vdots & \vdots & \ddots
\end{pmatrix}.
$$
We treat the actions $Ax(t)$ and $Bx(t)$ in the sense of usual matrix-vector multiplication, where the summation is taken over the set of positive integer indices.
The components $b_{nj}$ in~\eqref{omega_b} are computed using formulas~\eqref{eig}:
$$
b_{nj} = - \frac{n}{a\pi j} \int_0^\pi \chi_{[z_1,z_2]}\left(\frac{sL}\pi\right)\bigl(\cos(n+j)s + \cos(n-j)s\bigr) ds.
$$
By taking into account that $\chi_{[z_1,z_2]}(z)$ is the indicator function, we rewrite the above expression as
\begin{equation}\label{b_nj}
\begin{aligned}
& b_{nj} = - \frac{n}{a\pi j} \int_{\pi z_1/L}^{\pi z_2/L} \bigl(\cos(n+j)s + \cos(n-j)s\bigr)ds\\
& = \frac{n}{a\pi j (j-n)}
\left\{ \sin\left( \frac{(j-n)\pi z_1}L \right) + \sin\left(\frac{(n - j)\pi z_2}L\right) \right\} \\
&+  \frac{n}{a\pi j (j+n)}  \left\{ \sin\left(\frac{(n + j)\pi z_1}L \right) - \sin\left(\frac{(n + j) \pi z_2}L\right) \right\},
\end{aligned}
\end{equation}
for all $j\neq n$, and the diagonal components of $b$ are
\begin{equation}\label{b_nn}
b_{nn} = \frac{z_1-z_2}{a L} + \frac{1}{2 a \pi n} \left\{ \sin\left( \frac{2 \pi n z_1}L \right) - \sin\left( \frac{2 \pi n z_2}L \right) \right\}.
\end{equation}

\subsection{Application of Theorem~\ref{thm3}}

In order to apply Theorem~\ref{thm3}, we first check that the operator $A:D(A)\to H$, presented in~\eqref{x_A_B}, generates a $C_0$-semigroup of bounded linear operators on the real Hilbert space $H=\ell^2\times \ell^2$, where the inner product in $H$ is defined in the usual way:
$
\left<x,\tilde x,\right>_H = \left<\xi,\tilde \xi\right>_{\ell^2} + \left<\eta,\tilde \eta\right>_{\ell^2} = \sum_{n=1}^\infty (\xi_n {\tilde \xi}_n + \eta_n {\tilde \eta}_n)
$.
Indeed, the domain of definition of the operator $A$ in~\eqref{x_A_B} is
$$
D(A) = \left\{ \begin{pmatrix}\xi \\ \eta \end{pmatrix}\in H\,|\, \sum_{n=1}^\infty \omega_n^2(\xi_n^2 + \eta_n^2)<\infty\right\},
$$
and $A$ generates the $C_0$-semigroup of bounded linear operators $\{e^{tA}\}_{t\ge 0}$ on $H$, according to the Lumer--Phillips theorem~\cite{LGM1999}.
This semigroup is extended to the unitary group  $\{e^{tA}\}_{t\in {\mathbb R}}$, with the operators
 $e^{tA}$ being represented by the following block-diagonal matrix:
$$
\begin{aligned}
e^{tA} = {\textrm diag}\left(e^{tA_1},e^{tA_2},...\right),\,
e^{tA_n}=\begin{pmatrix}\cos \omega_n t & \sin \omega_n t \\ -\sin\omega_n t & \cos \omega_n t\end{pmatrix},
\end{aligned}
$$
\begin{equation}\label{etA_norm}
\|e^{tA}\| = 1 \quad \text{for all}\; t\in\mathbb R.
\end{equation}

In the sequel, we restrict our consideration to the case $[z_1,z_2]=[0,L]$, so that $\chi_{[z_1,z_2]}(z)\equiv 1$ on $[0,L]$, and the operator $B:H\to H$ in~\eqref{x_A_B} is reduced to the following form:
$
B =-\frac{1}{a} \begin{pmatrix}0 & 0 \\ I & 0\end{pmatrix}
$,
owing to the representations~\eqref{b_nj} and~\eqref{b_nn}.
Here, $I$ is the identity operator on $\ell^2$ and, therefore,
\begin{equation}\label{B_norm}
\|B\| = \frac{1}a.
\end{equation}

By putting together the norm representations~\eqref{etA_norm} and~\eqref{B_norm}, we deduce the following corollary of Theorem~\ref{thm3} concerning the reachable sets of system~\eqref{beam_abstract}.

{\em Corollary~1 of Theorem~\ref{thm3}:}
Let $[z_1,z_2]=[0,L]$, and let $K$ be a nonempty subset of $L^2(0,T)$ such that $\|u(\cdot)\|_{L^2(0,T)}\le r$ for all $u(\cdot)\in K$. Then, for any initial state $x_0\in H$ and $T<(a/r)^2$, the Kolmogorov $n$-width of the corresponding reachable set ${\cal R}_T(x_0)$ of system~\eqref{beam_abstract} with controls from $K$ admits the estimate:
\begin{equation}
\begin{aligned}
d_n({\cal R}_T(x_0))_H \le & \frac{\|x_0\| \sqrt{T} }{a} d_n(K)_{L^2(0,T)} \\
&+ \frac{\|x_0\| T r^2}{a(a- \sqrt{T}r)}\;\text{for each}\;n\in{\mathbb N}.
\end{aligned}
\label{dn_beam}
\end{equation}

\section{Application to the bilinear Schr\"odinger equation}\label{sec_schrodinger}
The obtained results will be applied to estimate the reachable set of the controlled
Schr\"odinger equation describing a quantum particle in a one-dimensional potential well~\cite{R2003,BR2014}:
\begin{equation}
\begin{aligned}
& i \frac{\partial \psi(t,z)}{\partial t} = - \frac{\partial^2 \psi(t,z)}{\partial z^2} - u(t)\mu(z)\psi(t,z),\quad z\in (0,1),\\
& \psi(t,0)=\psi(t,1)=0,\quad t\in (0,T),
\end{aligned}
\label{schrodinger}
\end{equation}
where $\psi:[0,T]\times [0,1]\to \mathbb C$ is the wave function, $\mu$ is the dipolar moment of the particle, $u$ is the amplitude of the electric field{.}
In this section, we denote the Lebesgue and Sobolev spaces over the field of complex numbers as $L^2 \left( (a,b);{\mathbb C} \right)$ and $H^k \left( (a,b);{\mathbb C} \right) $, respectively.
The brief notations $L^2(a,b)$ and $H^k(a,b)$ are reserved for the spaces over the field of real numbers.

It was proved in~\cite{BL2010} that system~\eqref{schrodinger} is locally controllable in $$H^3_{(0)}(0,1)=\{\psi \in H^3 \left( (0,1);{\mathbb C} \right) \,|\, \psi = \psi'' =0 \;\text{at} \;z=0\;\text{and}\; z=1\},$$
but it is uncontrollable in $H^2\left( (0,1);{\mathbb C} \right)\cap H^1_0\left( (0,1);{\mathbb C} \right)$ in the exact sense.
More precisely, system~\eqref{schrodinger} with controls in $L^2(0,T)$ is locally exactly controllable around its ground state in $H^3_{(0)}(0,1) \cap {\mathcal S}$ under generic assumptions on $\mu\in H^3(0,T)$, where $S$ is the unit sphere in $L^2 \left( (0,1);{\mathbb C} \right)$.
In addition, some states of $H^3_{(0)}(0,1) \cap {\mathcal S}$ are note reachable in arbitrary small time $T$ under appropriate conditions on $\mu$, so a positive minimal time $T$ is required for local controllability~\cite{C2006}.

In this section, we estimate the Kolmogorov $n$-width of the reachable sets ${\cal R}_T$ for system~\eqref{schrodinger} in the Hilbert space $H=L^2\left((0,1);{\mathbb C}\right)$
by considering bounded controls of the class $L^2(0,T)$.
Control system~\eqref{schrodinger} can be written in the abstract form~\eqref{bilinear_cs} as
\begin{equation}
\dot \phi (t) = A\phi(t) +u(t) B \phi(t),
\label{schrodinger_abstract}
\end{equation}
where $A:D(A)\to H$ and $B:H\to H$ are defined by
$$
\begin{aligned}
D(A) &= H^2\left((0,1);{\mathbb C}\right) \cap H^1_0\left((0,1);{\mathbb C}\right),\\
A \phi &= i \frac{d^2 \phi}{dx^2},\; B \phi = i \mu \phi,
\end{aligned}
$$
and $\|B\|=\|\mu\|_{L^2(0,1)}$.
It is well-known~\cite{BL2010} that $A$ generates the group $\{e^{tA}\}_{t\in\mathbb R}$ of isometries on $H$, and the action of $e^{tA}$ on a $\phi \in H$ is defined as
$
e^{tA} \phi = \sum_{k=1}^\infty \left< \phi,\phi_k\right>e^{-i\lambda_k t } \phi_k
$,
where $\phi_k(z)=\sqrt{2} \sin(\pi k z)$ are normalized eigenfunctions of $A$ and
$\lambda_k = \pi^2 k^2$, $k\in\mathbb N$.
By applying Theorem~\ref{thm3} to control system~\eqref{schrodinger_abstract}, we obtain the following corollary.

{\em Corollary~2 of Theorem~\ref{thm3}:}
Let $K$ be a nonempty subset of $L^2(0,T)$ such that $\|u(\cdot)\|_{L^2(0,T)}\le r$ for all $u(\cdot)\in K$. Then, for any initial state $\phi_0\in H$ and $T<\frac{1}{\|\mu\|^2 r^2}$, the Kolmogorov $n$-width of the corresponding reachable set ${\cal R}_T(\phi_0)$ of system~\eqref{schrodinger_abstract} with controls from $K$ admits the following estimate:
\begin{equation}
\begin{aligned}
d_n({\cal R}_T(\phi_0))_H \le & \|\mu\| \|\phi_0\| \sqrt{T} d_n(K)_{L^2(0,T)} \\
&+ \frac{\|\mu\|^2 \|\phi_0\| T r^2}{1-\|\mu\| \sqrt{T}r}\;\text{for each}\;n\in{\mathbb N}.
\end{aligned}
\label{dn_schrodinger}
\end{equation}

\section{Conclusions and future work}
To the best of our knowledge, this paper is the first work dealing with estimates of the reachable sets of infinite-dimensional control systems in terms of the Kolmogorov $n$-width.
The results of Section~\ref{sec_nonlinear} are applicable to rather general classes of nonlinear operators {between Banach spaces $X$ and $Y$}.
{
Our characterization differs from the approaches of~\cite{DPW2013} for compact sets and~\cite{CD2016} for holomorphic maps,
as we do not assume the differentiability of the corresponding operator $\pi:X\to Y$ or the compactness of a set $K\subset X$.
Although our results do not yield a straightforward polynomial or exponential decay of the $n$-width as $n\to \infty$, we propose explicit estimates of $d_n(\pi(K))_Y$ in terms of the norm (Theorem~\ref{thm1}) and the width (Theorem~\ref{thm2}) of nonlinear terms.
}
The endpoint maps, considered in Section~\ref{sec_bilinear}, represent only one possible application of our approach.
Another prospective development could tailor the $n$-width theory to analyzing residuals of finite difference methods used in solving nonlinear partial differential and differential-algebraic equations.

It should be emphasized that the estimates~\eqref{dn_beam} and~\eqref{dn_schrodinger}, obtained as Corollaries~1 and~2 or Theorem~\ref{thm3},
are valid for arbitrary bounded sets of admissible controls $K$.
More specific assumptions, such as the compactness of {$K$} or the decay rate of $d_n(K)_{L^2(0,T)}$, are not required in Theorem~\ref{thm3}.
 We do not explore possible refinements of inequalities~\eqref{dn_bilinear},~\eqref{dn_beam},~\eqref{dn_schrodinger} under additional assumptions on $K$, deferring a more subtle analysis of the decay rate of $d_n({\cal R}_T(x_0))_H$, {also with potential extensions of Theorem~\ref{thm2},} to future studies.


\end{document}